\numberwithin{equation}{section}
\newtheorem{thm}[equation]{Theorem}
\newtheorem*{thm*}{Theorem}
\newtheorem{prop}[equation]{Proposition}
\theoremstyle{definition}
\newtheorem{rem}[equation]{Remark}
\newtheorem{hyp}[equation]{Hypothesis}
\newcommand{\bfgreek}[1]{\bm{\@nameuse{up#1}}}
\newcommand{\one}{{\mathbf{1}}}
\newcommand{\R}{\mathbb R}
\newcommand\Z{\mathbb Z}
\newcommand\A{\mathbb A}
\newcommand\Q{\mathbb Q}
\newcommand{\aaa}{\mathfrak{a}}
\newcommand{\Lie}{\operatorname{Lie}}
\newcommand\SSS{\mathcal{S}}
\newcommand\C{\mathbb C}
\newcommand\F{\mathbb F}
\newcommand\E{\mathcal E}
\newcommand\ira{\stackrel{\sim}{\rightarrow}}
\newcommand\hra{\hookrightarrow}
\newcommand\ra{\rightarrow}
\newcommand\g{\mathfrak g}
\newcommand\m{\mathfrak m}
\newcommand{\Whit}{\mathcal{W}}
\newcommand{\whit}{W}
\newcommand{\TW}{\mathcal{T}}
\newcommand{\strc}{\mathbb{S}}
\newcommand{\cusp}{\operatorname{cusp}}
\newcommand{\pair}{\mathbb{K}}
\newcommand{\per}{\mathbb{L}}
\newcommand{\coper}{\mathbb{M}}
\newcommand{\loc}{\operatorname{local}}
\newcommand{\glo}{\operatorname{global}}
\newcommand{\coh}{\operatorname{coh}}
\newcommand{\gper}{\operatorname{per}}
\newcommand{\mira}{\mathcal{P}}
\newcommand\OO{\mathcal O}
\newcommand\kk{\mathfrak k}
\newcommand\p{\mathfrak p}
\newcommand{\aut}{\operatorname{aut}}
\newcommand{\prng}{\mathbb{P}}
\newcommand{\Ad}{\operatorname{Ad}}
\newcommand{\SO}{\operatorname{SO}}
\newcommand\regulator{CRS}
\newcommand{\abs}[1]{\left|{#1}\right|}
\newcommand{\vol}{\operatorname{vol}}
\newcommand{\GL}{\operatorname{GL}}
\newcommand{\Aut}{\operatorname{Aut}}
\newcommand{\Hom}{\operatorname{Hom}}
\newcommand{\As}{\operatorname{As}}
\newcommand{\Pet}{\operatorname{Pet}}
\newcommand{\Res}{\operatorname{Res}}
\newcommand{\bs}{\backslash}
\newcommand{\swrz}{\mathcal{S}}
\title[]%
{Whittaker rational structures and special values\\ of the Asai $L$-function}
\author{Harald Grobner, Michael Harris and Erez Lapid}
\address{Harald Grobner: Fakult\"at f\"ur Mathematik\\ Universit\"at Wien\\ Oskar--Morgenstern--Platz 1\\ A-1090 Wien\\Austria}
\email{harald.grobner@univie.ac.at}
\address{Michael Harris: Univ Paris Diderot, Sorbonne Paris Cit\'e, UMR 7586, Institut de
Math\'ematiques de Jussieu-Paris Rive Gauche, Case 247, 4 place Jussieu
F-75005, Paris, France;~\\
Sorbonne Universit\'es, UPMC Univ Paris 06, UMR 7586, IMJ-PRG, F-75005
Paris, France;~\\
CNRS, UMR7586, IMJ-PRG, F-75013 Paris, France;~  \\ Department of
Mathematics, Columbia University, New York, NY  10027, USA. }
\email{}
\address{Erez Lapid: Department of Mathematics, the Weizmann Institute of Science, Rehovot 7610001, Israel}
\email{erez.m.lapid@gmail.com}
\keywords{Regulator, critical value, cuspidal automorphic representation, Asai $L$-function}
\subjclass[2010]{Primary: 11F67; Secondary: 11F41, 11F70, 11F75, 22E55}
\thanks{H.G. is supported by the Austrian Science Fund (FWF), project number P 25974-N25.}
\thanks{The research leading to these results has received funding from the European Research Council under the
European Community's Seventh Framework Programme (FP7/2007-2013) / ERC Grant agreement n. 290766 (AAMOT)}
\thanks{E.L. partially supported by the ISF center of excellence grant \#1691/10 and grant \#711733 from the Minerva foundation}
\begin{document}

\maketitle

\centerline{{\it Dedicated to Jim Cogdell on the occasion of his 60th birthday}}


\begin{abstract}
Let $F$ be a totally real number field and $E/F$ a totally imaginary quadratic extension of $F$.
Let $\Pi$ be a cohomological, conjugate self-dual cuspidal automorphic representation of $\GL_n(\A_E)$.
Under a certain non-vanishing condition we relate the residue and the value of the Asai $L$-functions at $s=1$
with rational structures obtained from the cohomologies in top and bottom degrees via the Whittaker coefficient map.
This generalizes a result in Eric Urban's thesis when $n = 2$, as well as a result of the first two named authors,
both in the case $F = \Q$.
\end{abstract}

\setcounter{tocdepth}{1}
\tableofcontents

\section{Introduction}

Let $F$ be a totally real number field and $E/F$ a totally imaginary quadratic extension of $F$ with non-trivial Galois involution $\tau$.
Let $\Pi$ be a cuspidal automorphic representation of $\GL_n(\A_E)$.
One can associate two Asai $L$-functions over $F$, denoted $L(s,\Pi,\As^+)$ and $L(s,\Pi,\As^-)$.
These are Langlands $L$-functions attached to representations of the $L$-group of $\GL_n/E$,
and the Rankin-Selberg product of $\Pi$ with $\Pi^\tau$ factors as
\begin{equation}\label{RS}
L(s,\Pi \times\Pi^\tau) = L(s,\Pi,\As^+)\cdot L(s,\Pi,\As^-).
\end{equation}
In this paper we consider representations $\Pi$ that arise by stable quadratic base change from an automorphic representation $\pi$ of the unitary group $H$ over $F$.
In particular, $\Pi$ is \emph{conjugate self-dual}:
$$\Pi^{\vee} \cong \Pi^\tau.$$
We have an equality of partial $L$-functions
\[
L^S(s,\Pi,\As^{(-1)^n}) = L^S(s,\pi,\Ad),
\]
where $\Ad$ is the adjoint representation of the $L$-group of $H$.
(The other Asai $L$-function equals the $L$-function of $\pi$ with respect to the twist of $\Ad$ by the character corresponding to $E/F$.)


Since $\Pi^\tau \cong \Pi^{\vee}$, the Rankin-Selberg $L$-function on the left-hand side of \eqref{RS} has a simple pole at $s = 1$ and the assumption that
$\Pi$ is a base change from a unitary group implies that this pole arises as the pole of $L(s,\Pi,\As^{(-1)^{n-1}})$ at $s = 1$. Moreover, $L(s,\Pi,\As^{(-1)^n})$ is holomorphic and non-vanishing at $s=1$. This applies in particular if $\Pi$ is cohomological and conjugate-dual: Then it is known that $\Pi$ is automatically a base change from some unitary group $H$, and moreover $s = 1$ is a critical value of $L(s,\Pi,\As^{(-1)^n})$.

Hypothetically,
$L(s,\Pi,\As^{(-1)^{n-1}}) = \zeta(s)L(s,M^{\flat}(\Pi))$ for some motive $M^{\flat}(\Pi)$ (which we do not specify here), where $\zeta(s)$ is the Riemann zeta function.
One of the main goals of this note is to relate the residue at $s = 1$ of $L(s,\Pi,\As^{(-1)^{n-1}})$, which under the above hypothesis can be interpreted as a \emph{non-critical}
special value of the $L$-function of $M^{\flat}(\Pi)$, to a certain cohomology class attached to $\Pi$,
of the adelic ``locally symmetric'' space $\SSS_E=\GL_n(E)\bs \GL_n(\A_E)/A_G K_\infty$.

In fact, $\Pi$ contributes to the cohomology of $\SSS_E$ (with suitable coefficients) in several degrees.
For each degree $q$, where it contributes, one can define a rational structure on the $q$-th $(\m_G,K_\infty)$-cohomology of $\Pi_\infty$, which measures
the difference between the global cohomological rational structure and the one defined using the Whittaker-Fourier coefficient.
We call it the {\it Whittaker comparison rational structure} (\regulator) of degree $q$ and denote it by $\strc_q$.
Let $b$ and $t$ be the minimal and maximal degrees, respectively, where the $(\m_G,K_\infty)$-cohomology of $\Pi_\infty$ is non-zero.
Roughly speaking, our main result is that under a suitable local non-vanishing assumption,
$\Res_{s = 1}L(s,\Pi,\As^{(-1)^{n-1}})$ (resp., $L(1,\Pi,\As^{(-1)^n})$) spans, under suitable normalization,
the one-dimensional spaces $\strc_t$ (resp., $\strc_b$) over the field of definition of $\Pi$.
The precise results are stated in Theorems \ref{thm:Asai} and \ref{thm: crit} in the body of the paper.
In the case $n=2$ and $F=\Q$ such results had been proved in the theses of Eric Urban and Eknath Ghate, respectively \cite{MR1361742, MR1671216}.
The results here sharpen some of the main results of \cite{1308.5090} (for $F = \Q$).
We are hopeful that the pertinent non-vanishing assumption will be settled in the near future using the method recently developed by Binyong Sun.

The result for the top degree cohomology turns out to be a rather direct consequence of the well-known relation between the residue
of the Asai $L$-function at $s=1$ and the period integral over $\GL_n(F)\bs\GL_n(\A)/A_G$ \cite{MR984899}.
This is a twisted analogue of the realization of the Petersson inner product in the Whittaker model, due to Jacquet--Shalika \cite{MR623137}.

The two results for the top and bottom degrees are linked by Poincar\'e duality and the relation \eqref{RS}.
More precisely, one can relate $\Res_{s=1}L(s,\Pi\times\Pi^\vee)$
to a suitable pairing between $\strc_q$ and $\strc_{d-q}$ in any degree $q$ (where $d$ is the dimension of $\SSS_E$).
Once again, this is a simple consequence of the aforementioned result of Jacquet--Shalika.
Although we prove this Poincar\'e duality result only for (cohomological) conjugate self-dual representations of $\GL_n(\A_E)$,
the methodology is applicable for any cohomological representation over any number field.
In fact, a result of a similar flavor (in this generality) was obtained independently about the same time by Balasubramanyam--Raghuram \cite{1408.3896}.

We also note that A. Venkatesh has recently proposed a conjecture relating the rational structure of the contribution of
$\Pi$ to cohomology (in all degrees) to the $K$-theoretic regulator map of a hypothetical motive attached to $\Pi$.
We hope that the precise statement of the conjecture will be available soon.
At any rate, our results seem to be compatible with this conjecture.

We thank the anonymous referee for a very careful reading of the manuscript and for pointing out several inaccuracies in the
original version of the paper.

\section{Notation and conventions}

\subsection{Number fields and the groups under consideration}\label{sect:numberfields}
Throughout the paper, let $F$ be a totally real number field and $E/F$ denotes a totally imaginary quadratic extension of $F$
with non-trivial Galois involution $\tau$. The discriminant of $F$ (resp.\ $E$) is denoted $D_F$ (resp.\ $D_E$).
We let $I_E$ be the set of field embeddings $E\hra\C$ and the ring of integers (resp., adeles) of $E$ by $\OO_E$ and $\A_E$.
(Similar notation is used for the field $F$.)
We fix a non-trivial, continuous, additive character $\psi:E\bs \A_E\ra\C^\times$.

We fix an integer $n\ge 1$ and use $G$ to denote the general linear group $\GL_n$ viewed as a group scheme over $\Z$.

We denote by $T$ the diagonal torus in $G$, by $\mira$ the subgroup of $G$ consisting of matrices whose last row
is $(0,\dots,0,1)$, and by $U$ the unipotent subgroup of upper triangular matrices in $G$.

For brevity we write $G_\infty=R_{E/\Q}(G)(\R)$, where $R_{E/\Q}$ stands for the restriction of scalars from $E$ to $\Q$. We write $A_G$ for the group of positive reals $\R_+$ embedded diagonally in the center of $G_\infty$. Thus, $G(\A_E)\cong G(\A_E)^1\times A_G$ where $G(\A_E)^1:=\{g\in G(\A_E):\abs{\det g}_{\A_E^*}=1\}$.

Let $K_\infty$ be the standard maximal compact subgroup of $G_\infty$ isomorphic to $U(n)^{[F:\Q]}$.
We set $\g_\infty=\Lie(G_\infty)$, $\kk_\infty=\Lie(K_\infty)$, $\aaa_G=\Lie(A_G)$, $\p=\g_\infty/\kk_\infty$,
$\m_G=\g_\infty/\aaa_G$ and $\tilde\p=\p/\aaa_G$. Let $d:=\dim_\R\m_G -\dim_\R\kk_\infty$.
The choice of measures is all-important in all results of this kind.  Our choices are specified in Sections
\ref{sec:  measures} and \ref{sec: measures_F}.

\subsection{Coefficient systems}\label{sect:coeffsys}
We fix an irreducible, finite-dimensional, complex, continuous algebraic representation $E_\mu$ of $G_{\infty}$.
It is determined by its highest weight $\mu=(\mu_\iota)_{\iota\in I_E}$ where for each $\iota$,
$\mu_\iota=(\mu_{1,\iota},...,\mu_{n,\iota})\in\Z^n$ with $\mu_{1,\iota}\geq\mu_{2,\iota}\geq...\geq\mu_{n,\iota}$.
We assume that $E_\mu$ is conjugate self-dual, i.e., $E_\mu^\tau\cong E_\mu^\vee$, or, in other  words, that
$$\mu_{j,\tau(\iota)}+\mu_{n+1-j,\iota}=0, \quad\quad \iota\in I_E,\ 1\leq j\leq n.$$

\subsection{Cuspidal automorphic representations}
Let $\Pi$ be a cuspidal automorphic representation of $G(\A_E)=\GL_n(\A_E)$.
We shall assume that $\Pi$ is conjugate self-dual, i.e., $\Pi^\tau\cong\Pi^\vee$.
We say that $\Pi$ is \emph{cohomological} with respect to $E_\mu$, if $H^*(\g_\infty,K_\infty,\Pi\otimes E_\mu)\neq 0$.
We refer to Borel--Wallach \cite{MR1721403}, I.5, for details concerning $(\g_\infty,K_\infty)$-cohomology.

{\it Throughout the paper we assume that $\Pi$ is a conjugate self-dual, cuspidal automorphic representation
which is cohomological with respect to $E_\mu$.}  (However, this hypothesis is not used before the proof of
Theorem \ref{thm:Whittaker}.)
Denote the Petersson inner product on $\Pi\times\Pi^\vee$ by
\[
(\varphi,\varphi^\vee)_{\Pet}:=\int_{G(E)\bs G(\A_E)^1}\varphi(g)\varphi^\vee(g)\ dg.
\]
We write $\Whit^{\psi}(\Pi)$ for the Whittaker model of $\Pi$ with respect to the character
\[
\psi_U(u)=\psi(u_{1,2}+\dots+u_{n-1,n}).
\]
Similarly for $\Whit^{\psi^{-1}}(\Pi^\vee)$. Let
\[
\whit^{\psi}:\Pi\rightarrow\Whit^{\psi}(\Pi)
\]
be the realization of $\Pi$ in the Whittaker model via the $\psi$-Fourier coefficient, namely
\[
\whit^{\psi}(\varphi)=(\vol(U(E)\bs U(\A_E)))^{-1}\int_{U(E)\bs U(\A_E)}\varphi(ug)\psi(u)^{-1}\ du.
\]
Analogous notation is used locally.

\subsection{Pairings of $(\m_G,K_\infty)$-cohomology spaces}\label{sect:pairing_gK}
Suppose that $\rho$ and $\rho^*$ are two irreducible $(\g_\infty,K_\infty)$-modules which are in duality and let $(\cdot,\cdot)$ be
a non-degenerate invariant pairing on $\rho\times\rho^*$.
For $p+q=d$, let us define a pairing
$$\pair_{(\rho,\rho^*,(\cdot,\cdot))}^{\coh,p}: H^p(\m_G,K_\infty,\rho\otimes E_\mu)\times H^q(\m_G,K_\infty,\rho^*\otimes E^\vee_\mu)\ra (\wedge^d\tilde\p)^*$$
as follows: Recall that
\begin{gather*}
 H^p(\m_G,K_\infty,\rho\otimes E_\mu)\ira\Hom_{K_\infty}(\wedge^p\tilde\p,\rho\otimes E_\mu),\\
 H^q(\m_G,K_\infty,\rho^*\otimes E_\mu^\vee)\ira\Hom_{K_\infty}(\wedge^q\tilde\p,\rho^*\otimes E_\mu^\vee).
\end{gather*}
Suppose that $\tilde\omega\in \Hom_{K_\infty}(\wedge^p\tilde\p,\rho\otimes E_\mu)$ and
$\tilde\eta\in \Hom_{K_\infty}(\wedge^q\tilde\p,\rho^*\otimes E_\mu^\vee)$ represent
$\omega\in H^p(\m_G,K_\infty,\rho\otimes E_\mu)$ and\
$\eta\in H^q(\m_G,K_\infty,\rho^*\otimes E_\mu^\vee)$ respectively.
The cap product
\[
 \tilde\omega\wedge\tilde\eta\in \Hom_{K_\infty}(\wedge^d\tilde\p,\rho\otimes\rho^*\otimes E_\mu\otimes E_\mu^\vee)
\]
together with the pairing on $\rho\times\rho^*$ and the canonical pairing on $E_\mu\otimes E_\mu^\vee$, define an element
\[
\pair_{(\rho,\rho^*,(\cdot,\cdot))}^{\coh,p}(\omega,\eta)\in (\wedge^d\tilde\p)^*.
\]
Note that $(\wedge^d\p)^*$ is canonically isomorphic to the space of invariant measures on
$G_\infty/A_G K_\infty$.

\subsection{Locally symmetric spaces over $E$}
Recall the adelic quotient
$$\SSS_E:=G(E)\bs G(\A_E)^1/K_\infty.$$
We can view $\SSS_E$ as the projective limit $\SSS_E=\varprojlim_{K_f} \SSS_{E,K_f}$
where
\[
 \SSS_{E,K_f}=G(E)\bs G(\A_E)^1/K_\infty K_f
\]
and $K_f$ varies over the directed set of compact open subgroups of $G(\A_{E,f})$ ordered by opposite inclusion.
Note that each $\SSS_{E,K_f}$ is a orbifold of dimension $d=n^2[F:\Q]-1$.

A representation $E_\mu$ as in \S\ref{sect:coeffsys} defines a locally constant sheaf $\E_\mu$ on $\SSS_E$
whose espace \'etal\'e is $G(\A_E)^1/K_\infty\times_{G(E)} E_\mu$ with the discrete topology on $E_\mu$.
We denote by $H^q(\SSS_E,\E_\mu)$ and $H^q_c(\SSS_E,\E_\mu)$ the corresponding spaces of sheaf cohomology and sheaf
cohomology with compact support, respectively. They are $G(\A_{E,f})$-modules.
We have
\[
H^q(\SSS_E,\E_\mu)\cong \varinjlim_{K_f} H^q(\SSS_{E,K_f},\E_\mu)
\]
and
\[
 H_c^q(\SSS_E,\E_\mu)\cong \varinjlim_{K_f} H_c^q(\SSS_{E,K_f},\E_\mu),
\]
where the maps in the inductive systems are the pull-backs (Rohlfs \cite{MR1414393} Cor.\ 2.12 and Cor.\ 2.13).
For our purposes we will only use this result to save notation (or to avoid an abuse of notation): we could have simply worked throughout with the inductive limits of cohomologies. (In fact, in \cite{MR1044819} $H^q(\SSS_E,\E_\mu)$ is simply defined as $\varinjlim_{K_f} H^q(\SSS_{E,K_f},\E_\mu)$.)

Let $H^q_{\cusp}(\SSS_E,\E_\mu)$ be the $G(\A_{E,f})$-module of cuspidal cohomology, being defined as the $(\m_G,K_\infty)$-cohomology of the space of cuspidal automorphic forms. As cusp forms are rapidly decreasing, we obtain an injection
\[
\Delta^q: H^q_{\cusp}(\SSS_E,\E_\mu)\hookrightarrow H_c^q(\SSS_E,\E_\mu).
\]

\section{Instances of algebraicity}
\subsection{An action of $\Aut(\C)$}
Let $\nu$ be a smooth representation of either $G(\A_{E,f})$ or $G(E_w)$ for a non-archimedean place $w$ of $E$, on a complex vector space $W$.
For $\sigma\in \Aut(\C)$, we define the \emph{$\sigma$-twist} ${}^\sigma\!\nu$ following Waldspurger \cite{MR783510}, I.1:
If $W'$ is a $\C$-vector space which admits a $\sigma$-linear isomorphism $\phi:W\ra W'$ then we set
$${}^\sigma\!\nu:= \phi\circ\nu\circ \phi^{-1}.$$
This definition is independent of $\phi$ and $W'$ up to equivalence of representations.
One may hence always take $W':=W\otimes_{\sigma}\C$. \\\\
On the other hand, let $\nu=E_\mu$ be a highest weight representation of $G_\infty$ as in \S \ref{sect:coeffsys}.
The group $\Aut(\C)$ acts on $I_E$ by composition. Hence, we may define ${}^\sigma\!E_\mu$ to be the irreducible representation of $G_\infty$,
whose local factor at the embedding $\iota$ is $E_{\mu_{\sigma^{-1}\iota}}$, i.e., has highest weight $\mu_{\sigma^{-1}\iota}$.
As a representation of the diagonally embedded group $G(E)\hra G_\infty$, ${}^\sigma\!E_\mu$ is isomorphic to $E_\mu\otimes_{\sigma}\C$,
cf.\ Clozel \cite{MR1044819}, p.\ 128. Moreover, we obtain

\begin{prop}\label{prop:SigmaPi}
For all $\sigma\in\Aut(\C)$, ${}^\sigma\Pi_f$ is the finite part of a cuspidal automorphic representation ${}^\sigma\Pi$
which is cohomological with respect to ${}^\sigma\!E_\mu$. The representation ${}^\sigma\Pi$ is conjugate self-dual.
\end{prop}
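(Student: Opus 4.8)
The plan is to decompose the statement into three assertions: that ${}^\sigma\Pi_f$ is the finite part of a cuspidal automorphic representation ${}^\sigma\Pi$; that ${}^\sigma\Pi$ is cohomological with respect to ${}^\sigma\!E_\mu$; and that ${}^\sigma\Pi$ is again conjugate self-dual. For the first two, I would simply appeal to the foundational work of Clozel \cite{MR1044819}: a cuspidal automorphic representation of $\GL_n(\A_E)$ which is cohomological with respect to $E_\mu$ is, after twisting the archimedean component suitably, essentially determined by $\Pi_f$, and Clozel's theorem (the ``$\Aut(\C)$-equivariance of cuspidal cohomology'') guarantees that ${}^\sigma\!\Pi_f$ occurs in $H^*_{\cusp}(\SSS_E, {}^\sigma\!\E_\mu)$; by strong multiplicity one for $\GL_n$ this determines a cuspidal automorphic representation ${}^\sigma\Pi$ whose archimedean component is cohomological with respect to ${}^\sigma\!E_\mu$. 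The key input here is the rationality of cuspidal cohomology over the field of definition and the identification of ${}^\sigma\!E_\mu$ with $E_\mu \otimes_\sigma \C$ as a $G(E)$-module, which is stated just before the proposition.

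The remaining point—and the one I expect to require the most care—is the conjugate self-duality of ${}^\sigma\Pi$. The natural approach is to observe that the operations $\Pi \mapsto {}^\sigma\Pi$, $\Pi \mapsto \Pi^\tau$, and $\Pi \mapsto \Pi^\vee$ all commute appropriately at the level of the finite parts, and then to invoke strong multiplicity one once more to upgrade an isomorphism of finite parts to an isomorphism of automorphic representations. Concretely: the Galois involution $\tau$ acts on the places of $E$ and on $G(\A_{E,f})$, and one checks that $({}^\sigma\Pi)^\tau_f \cong {}^\sigma(\Pi^\tau)_f$, simply because $\sigma$-twisting and precomposition with $\tau$ are both defined placewise and commute. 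Likewise, the contragredient commutes with $\sigma$-twisting: $({}^\sigma\Pi)^\vee_f \cong {}^\sigma(\Pi^\vee)_f$, since forming the smooth dual is compatible with the $\sigma$-linear isomorphisms used to define the twist. Combining these with the hypothesis $\Pi^\tau \cong \Pi^\vee$ gives $({}^\sigma\Pi)^\tau_f \cong {}^\sigma(\Pi^\tau)_f \cong {}^\sigma(\Pi^\vee)_f \cong ({}^\sigma\Pi)^\vee_f$, and strong multiplicity one for $\GL_n$ over $E$ then forces $({}^\sigma\Pi)^\tau \cong ({}^\sigma\Pi)^\vee$ as automorphic representations.

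The main obstacle, such as it is, lies in making the second paragraph's placewise compatibilities fully precise: one must be careful about how $\Aut(\C)$ acts on the set $I_E$ and hence on the places of $E$, since $\sigma$ need not commute with $\tau$ as automorphisms of $\C$ (it does, however, permute the pair of embeddings $\{\iota, \tau\iota\}$ lying over a given real place of $F$ in a way compatible with $\tau$, because $\tau$ is the complex conjugation intrinsic to the CM structure). Once one has pinned down that $\sigma \circ \tau = \tau \circ \sigma$ on $I_E$—equivalently, that $\tau$ is $\Aut(\C)$-equivariantly the restriction of complex conjugation—the compatibility $({}^\sigma\Pi)^\tau \cong {}^\sigma(\Pi^\tau)$ at the finite places is a formal consequence of the definitions. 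I would also remark that the compatibility with the coefficient system is automatic: the condition $\mu_{j,\tau(\iota)} + \mu_{n+1-j,\iota} = 0$ defining conjugate self-duality of $E_\mu$ is visibly preserved under $\mu \mapsto \mu_{\sigma^{-1}(\cdot)}$, so ${}^\sigma\!E_\mu$ is again conjugate self-dual, consistent with the conclusion. No genuinely new ideas beyond Clozel's rationality theorem and strong multiplicity one are needed; the proof is a bookkeeping argument organized around those two inputs.
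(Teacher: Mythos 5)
Your proposal is correct and matches the paper's approach: the paper simply cites Clozel \cite{MR1044819}, Thm.~3.13 (noting that ``cohomological'' coincides with ``regular algebraic'' there) for the first two assertions and declares the conjugate self-duality ``obvious,'' which is precisely the formal commutation of $\sigma$-twisting with $(\cdot)^\tau$ and $(\cdot)^\vee$ plus strong multiplicity one that you spell out. Your cautionary remark about whether $\sigma$ and $\tau$ commute is unnecessary, however: $\sigma\in\Aut(\C)$ acts on $I_E$ by \emph{post}-composition and $\tau\in\Gal(E/F)$ acts by \emph{pre}-composition, so $\sigma\circ(\iota\circ\tau)=(\sigma\circ\iota)\circ\tau$ holds by associativity alone, with no appeal to the CM structure needed.
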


\begin{proof}
See \cite{MR1044819}, Thm.\ 3.13.
(Note that an irreducible $(\g_\infty,K_\infty)$-module is cohomological if and only if it is regular algebraic in the sense of [loc. cit.].)
The last statement is obvious.
\end{proof}

\subsection{Rationality fields and rational structures}
Recall also the definition of the rationality field of a representation (e.g., \cite{MR783510}, I.1).
If $\nu$ is any of the representations considered above, let $\mathfrak S(\nu)$ be the group of all automorphisms
$\sigma\in \Aut(\C)$ such that ${}^\sigma\!\nu\cong\nu $:
$$\mathfrak S(\nu):=\{\sigma\in \Aut(\C)|{}^\sigma\!\nu\cong\nu\}.$$
Then the \emph{rationality field} $\Q(\nu)$ is defined as the fixed field of $\mathfrak S(\nu)$,
$$\Q(\nu):=\{z\in\C| \sigma(z)=z \textrm{ for all } \sigma\in\mathfrak S(\nu)\}.$$
As a last ingredient we recall that a group representation $\nu$ on a $\C$-vector space $W$ is said to be
\emph{defined over a subfield $\F\subset\C$}, if there exists an $\F$-vector subspace $W_\F\subset W$, stable under the group action,
and such that the canonical map $W_\F\otimes_\F\C\rightarrow W$ is an isomorphism. In this case, we say that $W_\F$
is an \emph{$\F$-structure} for $(\nu,W)$.

\begin{rem} \label{eq: tprs}
If $(\nu,W)$ is irreducible, then a rational structure is unique up to homothety, if it exists. Moreover, if $W_\F$ is an $\F$-structure for $(\nu,W)$,
with $(\nu,W)$ irreducible, and if $V$ is a complex vector space with a trivial group action then any $\F$-structure for $(\nu\otimes 1,W\otimes V)$ is of the form
$W_\F\otimes V_\F$ for a unique $\F$-structure $V_\F$ of $V$ (as a complex vector space).
\end{rem}

It is easy to see that as a representation of $G(E)$, $E_\mu$ has a $\Q(E_\mu)$-structure, whence, so does $H^q(\SSS_E,\E_\mu)$, cf.\ \cite{MR1044819}, p.\ 122.

\begin{prop}\label{prop:PiStructure}
Let $\Pi$ be a cuspidal automorphic representation of $G(\A_E)$. Then $\Pi_f$ has a $\Q(\Pi_f)$-structure, which is unique up to homotheties.
If $\Pi$ is cohomological with respect to $E_\mu$, then $\Q(\Pi_f)$ is a number field.
Similarly, $H^q(\m_G,K_\infty,\Pi\otimes E_\mu)$ has a $\Q(\Pi_f)$-structure coming from the natural $\Q(E_\mu)$-structure of
$H^q(\SSS_E,\E_\mu)$.
\end{prop}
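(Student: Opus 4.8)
The plan is to establish the three assertions of Proposition \ref{prop:PiStructure} in order, reducing each to the corresponding general fact about smooth admissible representations of $\ell$-adic groups together with Proposition \ref{prop:SigmaPi} and the already-noted $\Q(E_\mu)$-structure on $H^q(\SSS_E,\E_\mu)$. First I would recall the general principle (due to Waldspurger \cite{MR783510}, I.1, building on work of Borel, Jacquet, Clozel) that an irreducible smooth representation $\nu$ of a totally disconnected locally compact group admits a model over its rationality field $\Q(\nu)$, and that this model is unique up to homothety by the irreducibility (cf.\ Remark \ref{eq: tprs}). Applying this to $\nu=\Pi_f$ gives the existence and uniqueness of a $\Q(\Pi_f)$-structure on $\Pi_f$; this uses only that $\Pi_f$ is irreducible smooth admissible, not cohomologicality.

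Next I would show $\Q(\Pi_f)$ is a number field when $\Pi$ is cohomological with respect to $E_\mu$. The idea is that $\Q(\Pi_f)$ is a finite extension of $\Q(E_\mu)$, which is itself a number field (it is the fixed field of a finite-index subgroup of $\Aut(\C)$, being cut out by conditions on the finitely many highest weights $\mu_\iota$). For this one combines Proposition \ref{prop:SigmaPi}, which says ${}^\sigma\Pi$ is again cuspidal and cohomological with respect to ${}^\sigma\!E_\mu$, with the strong multiplicity one theorem for $\GL_n$: the archimedean component $\Pi_\infty$ of a cohomological cuspidal representation is cohomological with respect to $E_\mu$, and there are only finitely many irreducible $(\g_\infty,K_\infty)$-modules with nonzero cohomology against a fixed $E_\mu$, so the orbit of $\Pi_f$ under $\Aut(\C)$ is finite once we know the orbit of $E_\mu$ is finite. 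Hence $\mathfrak S(\Pi_f)\supset \mathfrak S(E_\mu)\cap(\text{a finite-index subgroup})$ has finite index in $\Aut(\C)$, and its fixed field is a number field. I would cite \cite{MR1044819}, Thm.\ 3.13, for the packaging of this argument.

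Finally, for the $\Q(\Pi_f)$-structure on $H^q(\m_G,K_\infty,\Pi\otimes E_\mu)$, the strategy is to use that cuspidal cohomology is a $G(\A_{E,f})$-submodule of $H^q(\SSS_E,\E_\mu)$ which is isotypic (a sum of copies of $\Pi_f$) on the $\Pi$-part, and that $H^q(\SSS_E,\E_\mu)$ carries a natural $\Q(E_\mu)$-structure stable under $\Aut(\C)$, with $\sigma$ sending the $\Pi_f$-isotypic component to the ${}^\sigma\Pi_f$-isotypic component (by Proposition \ref{prop:SigmaPi} and the compatibility of the $\Aut(\C)$-action on cohomology with the twist on coefficient systems, \cite{MR1044819}, p.\ 122 and p.\ 128). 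The subfield of $\Aut(\C)$ fixing the $\Pi_f$-isotypic component of $H^q$ as a subspace then contains $\mathfrak S(\Pi_f)$ up to finite index, and a descent argument (intersecting the $\Q(E_\mu)$-structure with the isotypic component, then using that this component is $\Pi_f$ tensored with a finite-dimensional cohomology multiplicity space as in Remark \ref{eq: tprs}) produces the desired $\Q(\Pi_f)$-structure, again unique up to homothety because $H^q(\m_G,K_\infty,\Pi\otimes E_\mu)\cong \Pi_f\otimes H^q(\m_G,K_\infty,\Pi_\infty\otimes E_\mu)$ with the first factor irreducible.

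I expect the main obstacle to be the bookkeeping in the last step: making precise that the $\Aut(\C)$-action on $H^q(\SSS_E,\E_\mu)$ permutes isotypic components according to $\Pi_f\mapsto{}^\sigma\Pi_f$ in a way compatible with the twist ${}^\sigma\!E_\mu$ on the coefficients, and that the resulting Galois-semilinear structure descends along the (possibly nontrivial) identification of ${}^\sigma\!E_\mu$ with $E_\mu\otimes_\sigma\C$ over $G(E)$. Concretely one must check that the fixed points of the $\mathfrak S(\Pi_f)$-action on the $\Q(E_\mu)$-rational vectors in the $\Pi_f$-isotypic part span a $\Q(\Pi_f)$-form — this is exactly the Galois-descent lemma applied over the finite extension $\Q(\Pi_f)/\Q(E_\mu)$, but one has to be careful because $\Q(E_\mu)$ need not be contained in $\Q(\Pi_f)$, so one really works over the compositum. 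All of this is standard Clozel-style rational-structure theory, so none of the individual steps is deep; the care is entirely in the formulation.
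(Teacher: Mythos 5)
Your approach is essentially the paper's: the proof there is just a pointer to Clozel \cite{MR1044819}, Prop.\ 3.1, Thm.\ 3.13 and Prop.\ 3.16, supplemented by the observation that $\Q(E_\mu)\subseteq\Q(\Pi_f)$ via Strong Multiplicity One, and you are unpacking exactly these references. Two remarks on the details of your sketch.

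First, in your argument that $\Q(\Pi_f)$ is a number field, the inference ``only finitely many $(\g_\infty,K_\infty)$-modules have nonzero cohomology against a fixed $E_\mu$, hence the $\Aut(\C)$-orbit of $\Pi_f$ is finite'' is a non sequitur: for a fixed archimedean type there are infinitely many cuspidal representations. The actual finiteness in Clozel's Thm.\ 3.13 comes from the fact that the twist ${}^\sigma\Pi_f$ has the same ramification as $\Pi_f$ (the $\sigma$-linear intertwiner preserves $K_f$-fixed vectors), so all twists occur in $H^*(\SSS_{E,K_f},{}^\sigma\E_\mu)$ at a \emph{fixed} level $K_f$ and with $\E_\mu$ ranging over its finite $\Aut(\C)$-orbit; these cohomology spaces are finite-dimensional, which is what bounds the orbit. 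Without fixing the level the argument does not close.

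Second, the ``compositum'' worry you raise in the last paragraph is in fact moot: the paper notes that $\Q(E_\mu)\subseteq\Q(\Pi_f)$, which follows from Strong Multiplicity One (if $\sigma$ fixes $\Pi_f$ then ${}^\sigma\Pi=\Pi$, hence ${}^\sigma\Pi_\infty=\Pi_\infty$, and the highest weight is determined by $\Pi_\infty$). So the descent for the last assertion really does take place over the single field $\Q(\Pi_f)$, with the $\Q(E_\mu)$-structure on $H^q(\SSS_E,\E_\mu)$ base-changed to $\Q(\Pi_f)$ before applying Remark \ref{eq: tprs}. Folding this observation into your third step would both simplify your argument and match the paper.
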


\begin{proof}
This is contained in \cite{MR1044819}, Prop.\ 3.1, Thm.\ 3.13 and Prop.\ 3.16 (the Drinfeld-Manin principle).
The reader may also have a look at \cite{MR3208871} Thm.\ 8.1 and Thm.\ 8.6.
For the last statement, one observes that $\Q(E_\mu)\subseteq\Q(\Pi_f)$ by Strong Multiplicity One (Cf.\ \cite{MR3208871}, proof of Cor.\ 8.7.).
\end{proof}

\section{The Whittaker \regulator s}

\subsection{Rational structures on Whittaker models} \label{sec: ratwhit}
We recall the discussion of \cite{MR2439563}, \S 3.2, resp.\ \cite{MR2171731}, \S 3.3. Fix a non-archimedean place $w$ of $E$. Given a Whittaker function $\xi$ on $G(E_w)$ and $\sigma\in\Aut(\C)$ we define the Whittaker function ${}^{\sigma}\xi$ by
\begin{equation}
\label{eqn:aut-c-action}
{}^{\sigma}\xi(g): = \sigma(\xi(t_\sigma\cdot g)),
\end{equation}
where $t_\sigma$ is the (unique) element in $T(E_w)\cap\mira(E_w)$ which conjugates $\psi_U$ to $\sigma\psi_U$.
Note that $t_\sigma$ does not depend on $\psi$. We have, $t_{\sigma_1\sigma_2}=t_{\sigma_1}t_{\sigma_2}$
and hence ${}^{\sigma_1\sigma_2}\xi\equiv{}^{\sigma_1}({}^{\sigma_2}\xi)$ for all $\sigma_1,\sigma_2\in\Aut(\C)$.
Thus, if $\pi$ is any irreducible admissible generic representation of $G(E_w)$, then we obtain a $\sigma$-linear intertwining operator
$\TW_\sigma^\psi:\Whit^{\psi}(\pi)\rightarrow\Whit^{\psi}({}^\sigma\pi)$.
In particular, we get a $\Q(\pi)$ structure on $\Whit^{\psi}(\pi)$ by taking invariant vectors.
A similar discussion applies to irreducible admissible generic representations of $G(\A_{E,f})$.

\subsection{}
The map $\whit^\psi:\Pi\rightarrow\Whit^\psi(\Pi)$ gives rise to an isomorphism
\begin{multline} \label{eq: whipsiHi}
H^q(\m_G,K_\infty,\Pi\otimes E_\mu)\stackrel{\sim}{\longrightarrow}\\
H^q(\m_G,K_\infty,\Whit^{\psi}(\Pi)\otimes E_\mu)\cong
H^q(\m_G,K_\infty,\Whit^{\psi_\infty}(\Pi_\infty)\otimes E_\mu)\otimes\Whit^{\psi_f}(\Pi_f).
\end{multline}
Recall the $\Q(\Pi_f)$-structure on $H^q(\m_G,K_\infty,\Pi\otimes E_\mu)$, (respectively on $\Whit^{\psi_f}(\Pi_f)$) from Prop.\ \ref{prop:PiStructure} (respectively from \S\ref{sec: ratwhit}). Thus, by Rem.\ \eqref{eq: tprs} we obtain a $\Q(\Pi_f)$-structure on the cohomology space
$H^q(\m_G,K_\infty,\Whit^{\psi_\infty}(\Pi_\infty)\otimes E_\mu)$ (as a $\C$-vector space)
which we denote by $\strc^{\psi,q}_{\Pi}$ and call it the \emph{$q$-th Whittaker comparison rational structure} (\regulator) of $\Pi$.
In particular, $\strc^{\psi,q}_{\Pi}$ is a $\Q(\Pi_f)$-vector subspace of $H^q(\m_G,K_\infty,\Whit^{\psi_\infty}(\Pi_\infty)\otimes E_\mu)$
and $\dim_{\Q(\Pi_f)} \strc^{\psi,q}_{\Pi}=\dim_\C H^q(\m_G,K_\infty,\Whit^{\psi_\infty}(\Pi_\infty)\otimes E_\mu)$.

\subsection{Equivariance of local Rankin-Selberg integrals}\footnote{Essentially the same argument is given in the
middle of the proof of Theorem 2 of \cite{MR1969002}. We have restated it separately for convenience.}
\label{eq: equiRS}

For this subsection only let $F$ be a local non-archimedean field.
Let $\pi_1$ and $\pi_2$ be two generic irreducible representations of $\GL_n(F)$
with Whittaker models $\Whit^{\psi}(\pi_1)$ and $\Whit^{\psi^{-1}}(\pi_2)$ respectively.
Given $W_1\in\Whit^{\psi}(\pi_1)$, $W_2\in\Whit^{\psi^{-1}}(\pi_2)$ and $\Phi\in\swrz(F^n)$ a Schwartz-Bruhat function, consider the zeta integral
\[
Z^\psi(W_1,W_2,\Phi,s)=\int_{U(F)\bs\GL_n(F)}W_1(g)W_2(g)\Phi(e_ng)\abs{\det g}^s\ dg
\]
where $e_n$ is the row vector $(0,\dots,0,1)$ and the invariant measure $dg$ on $U(F)\bs\GL_n(F)$ is \emph{rational}, i.e.,
it assigns rational numbers to compact open subsets.
We view the above expression as a formal Laurent series $A^\psi(W_1,W_2,\Phi)\in\C((X))$ in $X=q^{-s}$ whose $m$-th coefficient $c_m^\psi(W_1,W_2,\Phi)$ is
\[
\int_{U(F)\bs\GL_n(F):\abs{\det g}=q^{-m}}W_1(g)W_2(g)\Phi(e_ng)\ dg.
\]
The last integral reduces to a finite sum, and vanishes for $m\ll0$, because of the support of Whittaker functions.
It is therefore clear (by a simple change of variable) that
\[
\sigma(c_m^\psi(W_1,W_2,\Phi))=c_m^{\psi}({}^\sigma W_1,{}^\sigma W_2,\sigma\Phi)
\]
for any $\sigma\in\Aut(\C)$.
Thus,
\[
A^{\psi}({}^\sigma W_1,{}^\sigma W_2,\sigma\Phi)=(A^\psi(W_1,W_2,\Phi))^\sigma
\]
where $\sigma$ acts on $\C((X))$ in the obvious way. The linear span of
\[
A^\psi(W_1,W_2,\Phi),\ W_1\in\Whit^\psi(\pi_1),\ W_2\in\Whit^{\psi^{-1}}(\pi_2),\ \Phi\in\swrz(F^n)
\]
is a fractional ideal $\mathfrak{I^\psi}(\pi_1,\pi_2)$ of $\C[X,X^{-1}]$.
Thus, by the above,
\[
\mathfrak{I^\psi}(\pi_1,\pi_2)^\sigma=\mathfrak{I^\psi}({}^\sigma \pi_1,{}^\sigma \pi_2)
\]
for any $\sigma\in\Aut(\C)$. Hence, if we write $L(s,\pi_1\times\pi_2)=(P_{\pi_1,\pi_2}(q^{-s}))^{-1}$
where $(P_{\pi_1,\pi_2}(X))^{-1}$ is the generator of $\mathfrak{I^\psi}(\pi_1,\pi_2)$
such that $P_{\pi_1,\pi_2}\in\C[X]$ and $P_{\pi_1,\pi_2}(0)=1$ then it follows that
$P_{{}^\sigma \pi_1,{}^\sigma \pi_2}=P_{\pi_1,\pi_2}^\sigma$.

Of course, this argument applies equally well to other $L$-factors defined by the Rankin-Selberg method.

\section{A cohomological interpretation of $\Res_{s=1}L(s,\Pi\times\Pi^\vee)$}\label{sect:RS}

\subsection{A pairing} \label{sec: pairing}
For any compact open subgroup $K_f$ of $G(\A_{E,f})$ we use the de Rham isomorphism to define a canonical map
of vector spaces
\[
H^d_c(\SSS_{E,K_f},\underline{\C})\xrightarrow{\int_{\SSS_{E,K_f}}}\C.
\]
Thus, if $p+q=d$ then we get a canonical non-degenerate pairing
\begin{equation} \label{eq: pairK}
 H^p_c(\SSS_{E,K_f},\E_\mu)\times H^q(\SSS_{E,K_f},\E^\vee_\mu)\rightarrow\C
\end{equation}
which is defined by taking the cap product to $H^d_c(\SSS_E,\E_\mu\otimes\E^\vee_\mu)$,
mapping it to $H^d_c(\SSS_E,\underline{\C})$ using the canonical map $\E_\mu\otimes\E^\vee_\mu\rightarrow\underline{\C}$
and finally applying $\int_{\SSS_{E,K_f}}$.
Note however that, as defined, the maps $\int_{\SSS_{E,K_f}}$ do not fit together compatibly to a map $H^d_c(\SSS_E,\underline{\C})\rightarrow\C$, since we have to take into account the degrees of the covering maps $\SSS_{E,K_f}\rightarrow\SSS_{E,K'_f}$, $K_f\subset K'_f$.
To rectify the situation, we fix once and for all a $\Q$-valued Haar measure $\gamma$ on $G(\A_{E,f})$
(which is unique up to multiplication by $\Q^*$). The normalized integrals $\int'_{\SSS_{E,K_f}}:
H^d_c(\SSS_{E,K_f},\underline{\C})\rightarrow\C$
given by $\int'_{\SSS_{E,K_f}}=\vol_\gamma(K_f)\int_{\SSS_{E,K_f}}$
are compatible with the pull-back with respect to the covering maps $\SSS_{E,K_f}\rightarrow\SSS_{E,K'_f}$, $K_f\subset K'_f$.
Thus, we get a map
\[
H^d_c(\SSS_E,\underline{\C})\xrightarrow{\int'_{\SSS_E}}\C.
\]
We denote the resulting pairing
\[
\prng^p: H^p_c(\SSS_E,\E_\mu)\times H^q(\SSS_E,\E^\vee_\mu)\rightarrow\C
\]
It depends implicitly on the choice of $\gamma$, but this ambiguity is only up to an element of $\Q^*$.\footnote{This point is implicit in \cite[p. 124]{MR1044819}.} At any rate, the maps $\int_{\SSS_{E,K_f}}$ (and consequently, $\int'_{\SSS_{E,K_f}}$ and $\int'_{\SSS_E}$) are $\Aut(\C)$-equivariant with respect to the standard rational structure of $H^d_c(\SSS_{E,K_f},\underline{\C})$ and $H^d_c(\SSS_E,\underline{\C})$. The same is therefore true for the pairing \eqref{eq: pairK} and $\prng^p$ (with respect to the rational structure of $\E_\mu$).

As noted in \cite[p. 124]{MR1044819}, the pairing $\prng^p$ restricts to a non-degenerate pairing
\[
 H^p_{\cusp}(\SSS_E,\E_\mu)\times H^q_{\cusp}(\SSS_E,\E^\vee_\mu)\rightarrow\C
\]
and therefore to a non-degenerate pairing
\[
 H^p_{\Pi_f}(\SSS_E,\E_\mu)\times H^q_{\Pi^\vee_f}(\SSS_E,\E^\vee_\mu)\rightarrow\C
\]
where $H^p_{\Pi_f}(\SSS_E,\E_\mu)$ is the $\Pi_f$-isotypic part of $H^p_{\cusp}(\SSS_E,\E_\mu)$
and similarly for $H^q_{\Pi^\vee_f}(\SSS_E,\E^\vee_\mu)$. Composing with $\Delta^p$ and $\Delta^q$ we finally get a non-degenerate pairing
$$\pair^{\Pet,p}: H^p(\m_G,K_\infty,\Pi\otimes E_\mu)\times H^q(\m_G,K_\infty,\Pi^\vee\otimes E^\vee_\mu)\ra\C.$$
This pairing coincides with the volume of $\SSS_E$ with respect to the complex-valued measure
$\pair_{(\Pi_\infty,\Pi_\infty^\vee,(\cdot,\cdot)_{\Pet})}^{\coh,p}\otimes\gamma$ of $G(\A_E)^1/K_\infty\cong G_\infty/A_G K_\infty\times G(\A_{E,f})$.
Here we identify $(\wedge^d\p)^*$ with the space of invariant measures on $G_\infty/A_G K_\infty$.

\subsection{Measures over $E$} \label{sec: measures}
At this point it will be convenient to introduce some Haar measures on various groups.
If $w$ is non-archimedean we take the Haar measure on $E_w$ which gives volume one to the integers $\OO_w$.
On $\C$ we take twice the Lebesgue measure.
Having fixed measures on $E_w$ for all $w$ we can define (unnormalized) Tamagawa measures on local groups by
providing a gauge form (up to a sign).
On the groups $G$, $\mira$ and $U$ we take the gauge form $\wedge dx_{i,j}/(\det x)^k$ where $(i,j)$
range over the coordinates of the non-constant entries in the group and $k$ is $n$, $n-1$ and $0$ respectively.
Note that if $w$ is non-archimedean then the volume of $G(\OO_w)$ is $\Delta_{G,w}^{-1}$ where
$\Delta_{G,w}=\prod_{j=1}^nL(j,\one_{E_w^*})$.
On $G(\A_E)$ and $G(\A_{E,f})$ we will take the measure
\[
\prod_w\Delta_{G,w}\ dg_w
\]
where $w$ ranges over all places (resp., all finite places).
On $A_G$ we take the Haar measure whose push-forward (to $\R_+$) under $\abs{\det}_{\A_E^*}$ is $dx/x$
where $dx$ is the Lebesgue measure. The isomorphism $G(\A_E)\cong A_G\times G(\A_E)^1$
gives a measure on $G(\A_E)^1$.
Then
\[
\vol(G(E)\bs G(\A_E)^1)=\abs{D_E}^{n^2/2}\Res_{s=1}\prod_{j=1}^n\zeta_E^*(s+j-1)
\]
where $D_E$ is the discriminant of $E$ and $\zeta_E^*(s)$ is the completed Dedekind zeta function of $E$.

Let $\xi\in(\wedge^d\tilde\p)^*$ correspond to the invariant measure on $A_G\bs G_\infty/K_\infty$
obtained by the push-forward of the Haar measure on $A_G\bs G_\infty$ chosen above.
Let $\Lambda_0\in\wedge^d\tilde\p$ be the element such that $\xi(\Lambda_0)=1$.


\subsection{The Whittaker realization of the Petersson inner product}

Given a finite set of places $S$ of $E$ and an irreducible generic essentially unitarizable representation $\pi_S$ of $G(E_S)$ with Whittaker
model $\Whit^{\psi_S}(\pi_S)$ define
\[
 [W,W^\vee]_S:=\frac{\Delta_{G,S}}{L(1,\pi_S\times\pi_S^\vee)}\cdot
 \int_{U(E_S)\bs\mira(E_S)}W(g)W^\vee(g)\ dg, \ \ W\in\Whit^{\psi}(\pi_S),\ W^\vee\in\Whit^{\psi^{-1}}(\pi^\vee_S)
\]
where $\Delta_{G,S}=\prod_{w\in S}\Delta_{G,w}$.
It is well known that this integral converges and defines a $G(E_S)$-invariant pairing on $\Whit^{\psi}(\pi_S)\times\Whit^{\psi^{-1}}(\pi^\vee_S)$.
If $S$ consists of the archimedean places of $E$, we simply write $[W,W^\vee]_\infty$.

We note that if $S$ consists of non-archimedean places only, then $[W,W^\vee]_S$ is $\Aut(\C)$-equivariant, i.e.,
\[
  [{}^\sigma W,{}^\sigma W^\vee]_S= \sigma([W,W^\vee]_S).
\]
Indeed, by uniqueness, it suffices to check this relation when the restriction of $W$ to $\mira(E_S)$ is compactly supported modulo $U(E_S)$,
in which case the integral reduces to a finite sum and the assertion follows from \S\ref{eq: equiRS} and the fact that the measures
chosen on $U(E_S)$ and $\mira(E_S)$ assign rational values to compact open subgroups.

With our choice of measures, given cuspidal automorphic forms $\varphi$, $\varphi^\vee$ in the space of $\Pi$ and $\Pi^\vee$,
respectively, we abbreviate $\whit^\psi_\varphi=\whit^\psi(\varphi)$, $\whit^{\psi^{-1}}_{\varphi^\vee}=\whit^{\psi^{-1}}(\varphi^\vee)$ and obtain
\begin{equation} \label{eq: loctoglob}
(\varphi,\varphi^\vee)_{\Pet}=\abs{D_E}^{n(n+1)/4}
\Res_{s=1}L(s,\Pi\otimes\Pi^\vee)[\whit^\psi_\varphi,\whit^{\psi^{-1}}_{\varphi^\vee}]_S
\end{equation}
(see \cite[p. 477]{MR3267120} which is of course based on \cite{MR623137})
provided that $S$ is a finite set of places of $E$ containing all the archimedean ones as well as all the non-archimedean
places for which either $\varphi$ or $\varphi^\vee$ is not $G(\OO_w)$-invariant or the conductor of $\psi_w$ is different from $\OO_w$.
(Note that $[\whit^\psi_\varphi,\whit^{\psi^{-1}}_{\varphi^\vee}]_S$ is unchanged by enlarging $S$
because of the extra factor $\Delta_{G,S}$ in the numerator.)

We will also write $[W,W^\vee]_f=[W,W^\vee]_S$
for any $W\in\Whit^{\psi_f}(\Pi_f)$ and $W^\vee\in\Whit^{\psi_f^{-1}}(\Pi_f^\vee)$
where $S$ is any sufficiently large set of non-archimedean places of $E$ (depending on $W$ and $W^\vee$).

\subsection{A relation between the Whittaker \regulator s and $\Res_{s=1}L(s,\Pi\times\Pi^\vee)$}

\begin{thm}\label{thm:Whittaker}
Let $\Pi$ be a conjugate self-dual, cuspidal automorphic representation of $G(\A_E)=\GL_n(\A_E)$, which is cohomological with respect to an irreducible,
finite-dimensional, algebraic representation $E_\mu$. For all degrees $p$, the number $\left(\abs{D_E}^{n(n+1)/4}\Res_{s=1}L(s,\Pi\times\Pi^\vee)\right)^{-1}$ spans the one-dimensional $\Q(\Pi_f)$-vector space
\[
\Lambda\circ\pair_{(\Whit^{\psi_\infty}(\Pi_\infty),\Whit^{\psi_\infty^{-1}}(\Pi_\infty^\vee),[\cdot,\cdot]_\infty)}^{\coh,p}
(\strc^{\psi,p}_\Pi,\strc^{\psi^{-1}, d-p}_{\Pi^\vee})\subset\C
\]
where $\Lambda:(\wedge^d\tilde\p)^*\rightarrow\C$ is the evaluation at the element $\Lambda_0\in\wedge^d\tilde\p$ defined in \S\ref{sec: measures}.
\end{thm}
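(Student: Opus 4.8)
The plan is to compute the pairing $\Lambda\circ\pair_{(\Whit^{\psi_\infty}(\Pi_\infty),\Whit^{\psi_\infty^{-1}}(\Pi_\infty^\vee),[\cdot,\cdot]_\infty)}^{\coh,p}$ on explicit generators of the Whittaker \regulator s $\strc^{\psi,p}_\Pi$ and $\strc^{\psi^{-1},d-p}_{\Pi^\vee}$, and to match the result against the Petersson pairing $\pair^{\Pet,p}$ via the loc.-to-glob identity \eqref{eq: loctoglob}. Concretely, I would start from the isomorphism \eqref{eq: whipsiHi}, which factors $H^p(\m_G,K_\infty,\Pi\otimes E_\mu)$ through $H^p(\m_G,K_\infty,\Whit^{\psi_\infty}(\Pi_\infty)\otimes E_\mu)\otimes\Whit^{\psi_f}(\Pi_f)$ by applying $\whit^\psi$. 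By definition $\strc^{\psi,p}_\Pi$ is the image of the $\Q(\Pi_f)$-structure on cohomology under this map (tensored against the $\Q(\Pi_f)$-structure on $\Whit^{\psi_f}(\Pi_f)$), and similarly for $\Pi^\vee$ in degree $d-p$. So choose a $\Q(\Pi_f)$-rational class $\omega\in H^p(\m_G,K_\infty,\Pi\otimes E_\mu)$ and a $\Q(\Pi_f)$-rational class $\eta\in H^{d-p}(\m_G,K_\infty,\Pi^\vee\otimes E_\mu^\vee)$; write their Whittaker images as $\omega_\infty\otimes W_f$ and $\eta_\infty\otimes W_f^\vee$ with $\omega_\infty,\eta_\infty$ spanning the (one-dimensional, by cohomological purity) archimedean cohomology spaces, and $W_f\in\Whit^{\psi_f}(\Pi_f)$, $W_f^\vee\in\Whit^{\psi_f^{-1}}(\Pi_f^\vee)$ rational.

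The heart of the argument is the comparison of the two ways of pairing $\omega$ against $\eta$. On the one hand, $\pair^{\Pet,p}(\omega,\eta)$ is rational: it is the composite of the $\Aut(\C)$-equivariant cup-product pairing $\prng^p$ with the injections $\Delta^p,\Delta^{d-p}$ restricted to the $\Pi_f$- and $\Pi_f^\vee$-isotypic cuspidal cohomology, and $\omega,\eta$ are $\Q(\Pi_f)$-rational, so $\pair^{\Pet,p}(\omega,\eta)\in\Q(\Pi_f)$ — in fact it spans $\Q(\Pi_f)$ since the pairing is non-degenerate on these one-dimensional-over-$\Q(\Pi_f)$ (after fixing $K_f$) pieces. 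On the other hand, by the last displayed formula of \S\ref{sec: pairing}, $\pair^{\Pet,p}(\omega,\eta)$ equals the volume of $\SSS_E$ with respect to the measure $\pair_{(\Pi_\infty,\Pi_\infty^\vee,(\cdot,\cdot)_{\Pet})}^{\coh,p}\otimes\gamma$. Unwinding the definition of $\pair^{\coh,p}$ and transporting the pairing $(\cdot,\cdot)_{\Pet}$ to the Whittaker model, this volume factors as a product of: (i) the archimedean cohomological pairing $\Lambda\circ\pair_{(\Whit^{\psi_\infty},\Whit^{\psi_\infty^{-1}},[\cdot,\cdot]_\infty)}^{\coh,p}(\omega_\infty,\eta_\infty)$ evaluated at $\Lambda_0$; (ii) the finite Whittaker pairing $[W_f,W_f^\vee]_f$, which is $\Q(\Pi_f)$-rational by the equivariance established in \S\ref{sec: pairing} (based on \S\ref{eq: equiRS}); and (iii) the scalar $\abs{D_E}^{n(n+1)/4}\Res_{s=1}L(s,\Pi\times\Pi^\vee)$ coming from \eqref{eq: loctoglob}, which converts $(\cdot,\cdot)_{\Pet}$ into $[\cdot,\cdot]_\infty\cdot[\cdot,\cdot]_f$. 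The bookkeeping of the measure normalizations of \S\ref{sec: measures} — in particular the fact that $\gamma$ is $\Q$-valued, that the measures on $U(E_S)$, $\mira(E_S)$ are rational, and that $\Delta_{G,S}$ and the $\Delta_{G,w}$-normalization of $\gamma$ cancel correctly — is what makes (ii) and the $\gamma$-volume rational.

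Putting this together, rationality of $\pair^{\Pet,p}(\omega,\eta)$ together with rationality of $[W_f,W_f^\vee]_f$ forces
\[
\abs{D_E}^{n(n+1)/4}\,\Res_{s=1}L(s,\Pi\times\Pi^\vee)\cdot\Lambda\Bigl(\pair_{(\Whit^{\psi_\infty}(\Pi_\infty),\Whit^{\psi_\infty^{-1}}(\Pi_\infty^\vee),[\cdot,\cdot]_\infty)}^{\coh,p}(\omega_\infty,\eta_\infty)\Bigr)\in\Q(\Pi_f)^\times,
\]
and since $\strc^{\psi,p}_\Pi=\Q(\Pi_f)\cdot\omega_\infty$ and $\strc^{\psi^{-1},d-p}_{\Pi^\vee}=\Q(\Pi_f)\cdot\eta_\infty$, this says exactly that $\bigl(\abs{D_E}^{n(n+1)/4}\Res_{s=1}L(s,\Pi\times\Pi^\vee)\bigr)^{-1}$ spans the displayed one-dimensional $\Q(\Pi_f)$-space. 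The main obstacle I anticipate is step (i)–(ii)–(iii): carefully justifying that the volume of $\SSS_E$ against the product measure genuinely factors as a product of the archimedean cohomological pairing, the finite Whittaker pairing, and the residue — this is where \eqref{eq: loctoglob} must be fed in at the level of cohomology classes rather than single automorphic forms, and where one must check that the measure-theoretic normalizations on $G_\infty/A_GK_\infty$ (encoded by $\Lambda_0$) and on $G(\A_{E,f})$ (encoded by $\gamma$) are precisely the ones appearing in \eqref{eq: loctoglob} up to $\Q^\times$. A secondary technical point is verifying that the archimedean cohomology is one-dimensional in each relevant degree (so that $\omega_\infty,\eta_\infty$ are well-defined up to scalar and the pairing is a single number), which follows from the cohomological-purity/Delorme-type description of $H^*(\m_G,K_\infty,\Pi_\infty\otimes E_\mu)$ but should be invoked explicitly.
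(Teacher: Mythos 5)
Your approach is essentially the paper's: pass the Petersson pairing to the Whittaker side via \eqref{eq: whipsiHi}, use \eqref{eq: loctoglob} to extract $\abs{D_E}^{n(n+1)/4}\Res_{s=1}L(s,\Pi\times\Pi^\vee)$ as the proportionality factor between the global pairing and the local archimedean pairing, and invoke rationality of $\pair^{\Pet,p}$ and of $[\cdot,\cdot]_f$. However, one step of your setup is wrong as stated and should be excised. You assume that $H^p(\m_G,K_\infty,\Whit^{\psi_\infty}(\Pi_\infty)\otimes E_\mu)$ is one-dimensional and write the Whittaker image of a rational cohomology class as a simple tensor $\omega_\infty\otimes W_f$. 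That is false in general: the text records the dimension identity $\dim_{\Q(\Pi_f)}\strc^{\psi,p}_\Pi=\dim_\C H^p(\m_G,K_\infty,\Whit^{\psi_\infty}(\Pi_\infty)\otimes E_\mu)$, and for a tempered cohomological $\Pi_\infty$ this dimension in intermediate degrees is a product of binomial coefficients $\binom{n-1}{j_v}$ over the archimedean places, hence exceeds $1$ as soon as $n\geq 3$ or $[F:\Q]\geq 2$ and $b<p<t$. The ``cohomological purity'' you invoke holds only at the extreme degrees $p=b$ and $p=t$.

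The argument survives once you drop the rank-one ansatz and work with the pairings abstractly, which is what the paper does. Define $\pair_\Pi^{\glo,p}$ on the archimedean cohomology groups to be the pairing which, under \eqref{eq: whipsiHi}, corresponds to $\pair^{\Pet,p}$ together with $[\cdot,\cdot]_f$. Your items (i)--(iii) then establish precisely two facts: first, $\pair_\Pi^{\glo,p}$ is $\Q(\Pi_f)$-bilinear on $\strc^{\psi,p}_\Pi\times\strc^{\psi^{-1},d-p}_{\Pi^\vee}$ and, being non-degenerate, its image there is all of $\Q(\Pi_f)$; and second, by \eqref{eq: loctoglob} and the measure normalizations of \S\ref{sec: measures},
\[
\pair_\Pi^{\glo,p}=\abs{D_E}^{n(n+1)/4}\,\Res_{s=1}L(s,\Pi\times\Pi^\vee)\cdot\Lambda\circ\pair^{\coh,p}_{(\Whit^{\psi_\infty}(\Pi_\infty),\Whit^{\psi_\infty^{-1}}(\Pi_\infty^\vee),[\cdot,\cdot]_\infty)}.
\]
No one-dimensionality of the individual cohomology spaces is needed; the ``one-dimensional'' in the theorem refers only to the image set in $\C$. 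Your identification of the volume formula and the measure bookkeeping as the real content of the proof is otherwise on target.
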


\begin{proof}
We have two pairings $\pair^{\loc,p}_{\Pi_\infty}$ and $\pair_{\Pi}^{\glo,p}$ on
\[
H^p(\m_G,K_\infty,\Whit^{\psi_\infty}(\Pi_\infty)\otimes E_\mu)\times
H^q(\m_G,K_\infty,\Whit^{\psi_\infty}(\Pi_\infty^\vee)\otimes E_\mu^\vee)
\]
(with $p+q=d$): Firstly, the local pairing $\pair^{\loc,p}_{\Pi_\infty}$ is defined to be
\[
\pair^{\loc,p}_{\Pi_\infty}:=\Lambda\circ\pair_{(\Whit^{\psi_\infty}(\Pi_\infty),\Whit^{\psi_\infty^{-1}}(\Pi_\infty^\vee),[\cdot,\cdot]_\infty)}^{\coh,p}.
\]
Secondly, in order to define $\pair_{\Pi}^{\glo,p}$ we use the isomorphism \eqref{eq: whipsiHi}: Namely, the pairing $\pair_{\Pi}^{\glo,p}$ is the one which is compatible under \eqref{eq: whipsiHi} with the pairing $\pair^{\Pet,p}$ (defined in \S\ref{sec: pairing}) on
$H^p(\m_G,K_\infty,\Pi\otimes E_\mu)\times H^q(\m_G,K_\infty,\Pi^\vee\otimes E_\mu^\vee)$
and the pairing $[\cdot,\cdot]_f$ on $\Whit^{\psi_f}(\Pi_f)\times\Whit^{\psi_f^{-1}}(\Pi_f^\vee)$.
By \eqref{eq: loctoglob} and our convention of measures we have
\[
\pair_\Pi^{\glo,p}=\abs{D_E}^{n(n+1)/4}\Res_{s=1}L(s,\Pi\times\Pi^\vee)\cdot \pair^{\loc,p}_{\Pi_\infty}.
\]
On the other hand, $\pair_\Pi^{\glo,p}(\strc^{\psi,i}_\Pi,\strc^{\psi^{-1},d-p}_{\Pi^\vee})=\Q(\Pi_f)$. This follows from the definition of $\strc^{\psi,p}_\Pi$ and the fact that (1) $[\cdot,\cdot]_f$ is $\Q(\Pi_f)$-rational with respect to the $\Q(\Pi_f)$-structures on $\Whit^{\psi_f}(\Pi_f)$ and $\Whit^{\psi_f^{-1}}(\Pi_f^\vee)$
and (2) $\pair^{\Pet,p}$ is $\Q(\Pi_f)$-rational with respect to the $\Q(\Pi_f)$-structures on $H^p(\m_G,K_\infty,\Pi\otimes E_\mu)$
and $H^q(\m_G,K_\infty,\Pi^\vee\otimes E^\vee_\mu)$ (see \S\ref{sec: pairing}).
The theorem follows.
\end{proof}

\section{A cohomological interpretation of $\Res_{s=1}L(s,\Pi,\As^{(-1)^{n-1}})$}\label{sect:Asai}

\subsection{Locally symmetric spaces over $F$}
We write $G'_\infty=R_{F/\Q}G(\R)$, where $R_{F/\Q}$ denotes restriction of scalars from $F$ to $\Q$, and denote by $K'_\infty$ the connected component of the identity of the intersection $K_\infty\cap G'_\infty$. It is isomorphic to $\SO(n)^{[F:\Q]}$. We write $A'_G$ for the group of positive reals embedded diagonally in the center of $G'_\infty$. (It will be convenient to distinguish between the isomorphic groups $A_G$ and $A'_G$.) As before, we have $G(\A_F)\cong G(\A_F)^1\times A'_G$ where $G(\A_F)^1:=\{g\in G(\A_F):\abs{\det g}_{\A_F^*}=1\}$. We write $\g'_\infty=\Lie(G'_\infty)$, $\kk'_\infty=\Lie(K'_\infty)$, $\aaa'_G=\Lie(A'_G)$, $\p'=\g'_\infty/\kk'_\infty$ and $\tilde\p'=\p'/\aaa'_{G}$.

Let
$$\SSS_F:=G(F)\bs G(\A_F)^1/K'_\infty$$
be the ``locally symmetric space'' attached to $G(F)$. The closed (non-injective) map $\SSS_F\rightarrow\SSS_E$
gives rise to a map $H_c^q(\SSS_E,\E_\mu)\ra H_c^q(\SSS_F,\E_\mu|_{\SSS_F})$ of $G(\A_{F,f})$-modules.

Finally, let $\epsilon'$ be character on $G(\A_F)$ given by $\varepsilon\circ\det$ if $n$ is even and $1$ if $n$ is odd,
where $\varepsilon$ is the quadratic Hecke character associated to the extension $E/F$ via class field theory.

As before, let $\Pi$ be a cuspidal automorphic representation of $G(\A_E)$ which is cohomological and conjugate self-dual.
Then $\Pi$ is $(G(\A_F),\epsilon')$-distinguished in the sense that
\[
\int_{G(F)\bs G(\A_F)^1}\varphi(h)\epsilon'(h)\ dh
\]
is non-zero for some $\varphi$ in $\Pi$. (Equivalently, (by \cite{MR1344660}) $L(s,\Pi,\As^{(-1)^{n-1}})$ has a pole at $s=1$.)
Indeed, otherwise $L(s,\Pi,\As^{(-1)^n})$ would have a pole, and hence in particular $\Pi_\infty$ would be $(G'_\infty,\chi)$-distinguished
where $\chi=\varepsilon\circ\det$ if $n$ is odd and $\chi=1$ if $n$ is even. However, it is easy to see that this is incompatible
with the description of tempered distinguished representations \cite{panichi}.
See \cite{MR2127943} and \cite{1206.0882}
for the relation between distinction and base change from a unitary group.

\subsection{A archimedean period on cohomology}
Suppose that $\rho$ is a tempered irreducible $(\g_\infty,K_\infty)$-module which is $(G'_\infty,\epsilon')$-distinguished, i.e.,
there exists a non-zero $(G'_\infty,\epsilon')$-equivariant functional $\ell$ on $\rho$.
(Such a functional is unique up to a constant. This follows from \cite{MR2553879}
and the automatic continuity in this context \cite{MR1176208, MR961900}.)
Observe that
\[
t=\frac{n(n+1)}{2}[F:\Q]-1=\dim_\R\SSS_F=\dim_\R\tilde\p',
\]
where $t$ is the highest degree for which $H^t(\m_G,K_\infty,\rho\otimes E_\mu)$ can be non-zero for a generic irreducible (essentially unitary) representation $\rho$. Moreover, $H^t(\m_G,K_\infty,\rho\otimes E_\mu)$ is one-dimensional.

Let $V_\lambda$ be a highest weight representation of $\GL_n(\R)$ with parameter $\lambda=(\lambda_1,\dots,\lambda_n)$
and let $\lambda^\vee=(-\lambda_n,\dots,-\lambda_1)$. Let $(\cdot,\cdot)_\lambda$ be the standard pairing
on $V_\lambda\times V_{\lambda^\vee}$.
Since by assumption $E_\mu$ is conjugate self-dual, we can define a $G'_\infty$-invariant form $\ell_\mu:E_\mu\rightarrow\C$ by taking
the tensor product of the pairings above over all archimedean places of $E$.

We define a functional
$$\per_{(\rho,\ell)}^{\coh,t}: H^t(\m_G,K_\infty,\rho\otimes E_\mu)\ra (\wedge^t\tilde\p')^*$$ as follows. Suppose that $\tilde\omega\in \Hom_{K_\infty}(\wedge^t\tilde\p,\rho\otimes E_\mu)$ represents $\omega\in H^t(\m_G,K_\infty,\rho\otimes E_\mu)$. We compose $\tilde\omega$ with the embedding $\wedge^t\tilde\p'\hookrightarrow\wedge^t\tilde\p$ and with $\ell\otimes\ell_\mu$ to get an element of $\per_{(\rho,\ell)}^{\coh,t}(\omega)\in(\wedge^t\tilde\p')^*$. We will make the following assumption:
\begin{hyp} \label{as: nonzero}
$\per_{(\rho,\ell)}^{\coh,t}$ is non-zero.
\end{hyp}

Hopefully, this will be proved in the near future using the method of Binyong Sun (cf. \cite{1307.5357, 1111.2636}).

Next, we fix a $\Q$-valued Haar measure $\gamma'$ on $G(\A_{F,f})$: As in \S\ref{sec: pairing} we use $\gamma'$ to define the normalized integrals
\[
H^t_c(\SSS_F,\underline{\C})\xrightarrow{\int'_{\SSS_F,\epsilon'}}\C,
\]
except that now we take the cup product with the class $[\epsilon']\in H^0(\SSS_F,\underline{\C})$ represented by $\epsilon'$ before integrating.
By composing $\int'_{\SSS_F,\epsilon'}$ with the map $H^t_c(\SSS_F,\E_\mu|_{\SSS_F})\rightarrow H^t_c(\SSS_F,\underline{\C})$
induced from $\ell_\mu$ and the map $H_c^t(\SSS_E,\E_\mu)\ra H_c^t(\SSS_F,\E_\mu|_{\SSS_F})$, we get a period map $H^t_c(\SSS_E,\E_\mu)\rightarrow\C$.
As before, this map is $\Aut(\C)$-equivariant. Composing with $\Delta^t$ we finally obtain a linear form
\[
\per^{\gper,t}: H^t(\m_G,K_\infty,\Pi\otimes E_\mu)\ra\C.
\]
It coincides with the volume of $\SSS_F$ with respect to the complex-valued measure
$\per_{(\Pi_\infty,\ell_{\aut})}^{\coh,t}\otimes\gamma'$ of $G(\A_F)^1/K'_\infty\cong G'_\infty/A'_G K'_\infty\times G(\A_{F,f})$
where
\[
\ell_{\aut}(\varphi)=\int_{G(F)\bs G(\A_F)^1}\varphi(h)\epsilon'(h)\ dh
\]
and we identify an element of $(\wedge^t\tilde\p')^*$ with an invariant measures on $ G'_\infty/A'_G K'_\infty$.
In particular,  $\per^{\gper,t}$ is non-zero if we assume Hypothesis \ref{as: nonzero}.

\subsection{Measures over $F$} \label{sec: measures_F}

We now fix some measures. If $v$ is a non-archimedean place of $F$, we take the Haar measure on $F_v$ which gives volume one to the integers $\OO_v$.
On $\R$ we take the Lebesgue measure.
This gives rise to Tamagawa measures on the local groups $G(F_v)$, $\mira(F_v)$ and $U(F_v)$ by taking the standard
gauge form as in \S\ref{sec: measures}.
Thus, if $v$ is non-archimedean then the volume of $G(\OO_v)$ is $\Delta_{G,v}^{-1}$ where
$\Delta_{G',v}=\prod_{j=1}^nL(j,\one_{F_v^*})$.
On $G(\A_F)$ and $G(\A_{F,f})$ we will take the measure
\[
\prod_v\Delta_{G',v}\ dg_v
\]
where $v$ ranges over all places (resp., all finite places).
The measure on $A'_G$ will be determined by the isomorphism $\abs{\det}_{\A_F^*}:A'_G\rightarrow\R_+$
and the measure $dx/x$ on $\R_+$ where $dx$ is the Lebesgue measure.
The isomorphism $G(\A_F)\cong A'_G\times G(\A_F)^1$ gives rise to a measure on $G(\A_F)^1$.
Then
\[
\vol(G(F)\bs G(\A_F)^1)=\abs{D_F}^{n^2/2}\Res_{s=1}\prod_{j=1}^n\zeta_F^*(s+j-1)
\]
where $D_F$ is the discriminant of $F$ and $\zeta_F^*(s)$ is the completed Dedekind zeta function of $F$.

Let $\xi'\in(\wedge^t\tilde\p')^*$ correspond to the invariant measure on $G'_\infty/A'_G K'_\infty$
obtained by the push-forward of the Haar measure on $G'_\infty/A'_G$ chosen above.
Let $\Lambda_0'\in\wedge^t\tilde\p'$ be the element such that $\xi'(\Lambda_0')=1$.

\subsection{The Whittaker realization of $\ell_{\aut}$}
Recall that $\psi$ was a fixed character of $E\bs\A_E$.
We assume from now on that the restriction of $\psi$ to $F\bs\A_F$ is trivial.

Given a finite set of places $S$ of $F$ and an irreducible generic unitarizable $(G(F_S),\epsilon')$-distinguished representation $\pi_S$ of $G(E_S)$ with Whittaker
model $\Whit^{\psi_S}(\pi_S)$ define
\[
 \ell_S(W):=\frac{\Delta_{G',S}}{L(1,\pi_S,\As^{(-1)^{n-1}})}\cdot
 \int_{U(F_S)\bs\mira(F_S)}W(h)\epsilon'(h)\ dh, \ \ W\in\Whit^{\psi}(\pi_S)
\]
where $\Delta_{G',S}=\prod_{v\in S}\Delta_{G',v}$.
The integral converges and defines a $(G(F_S),\epsilon')$-equivariant form on $\Whit^{\psi_S}(\pi_S)$ (\cite{MR2787356, 1212.6436}).

If $v$ is a non-archimedean place of $F$, then by the same argument as in \S\ref{eq: equiRS} we have
\[
L(s,{}^\sigma\Pi_v,\As^{(-1)^{n-1}})=L(s,\Pi_v,\As^{(-1)^{n-1}})^\sigma
\]
(where on the right-hand side, $\sigma$ acts on $\C(q^{-s})$ in the obvious way). In particular,
\begin{equation} \label{eq: L1as}
L(1,{}^\sigma\Pi_v,\As^{(-1)^{n-1}})=\sigma(L(1,\Pi_v,\As^{(-1)^{n-1}})).
\end{equation}
Thus, if $S$ consists only of non-archimedean places, then $\ell_S(W)$ is $\Aut(\C)$-equivariant, i.e.,
\[
  \ell_S({}^\sigma W)=\sigma(\ell_S(W)).
\]
Indeed, by uniqueness (\cite{MR1111204}), it suffices to check this relation when the restriction of $W$ to $\mira(E_S)$ is compactly supported modulo $U(E_S)$,
in which case the integral reduces to a finite sum and the assertion follows from \eqref{eq: L1as} and the
rationality of the measure on $U(F_S)\bs\mira(F_S)$.

With our choice of measures, for any cuspidal automorphic form $\varphi$ in the space of $\Pi$ and $\whit^\psi_\varphi=\whit^\psi(\varphi)$, we have
\begin{equation} \label{eq: loctoglobAs}
\ell_{\aut}(\varphi)=\abs{D_F}^{n(n+1)/4}\Res_{s=1}L(s,\Pi,\As^{(-1)^{n-1}})\cdot \ell_S(\whit^\psi_\varphi)
\end{equation}
provided that $S$ is a sufficiently large finite set of places of $F$. (Cf. Gelbart--Jacquet--Rogawski, \cite{MR1882037}, pp.\ 184--185 or Zhang \cite{MR3164988}, Sect.\ 3.2.)
Note that $\ell_S(\whit^\psi_\varphi)$ is unchanged by enlarging $S$ because of the extra factor $\Delta_{G',S}$ in the numerator.

As before we write $\ell_f(W)=\ell_S(W)$ for $W\in\Whit^{\psi_f}(\Pi_f)$ where $S$ is any sufficiently large set of places.

\subsection{A relation between the top Whittaker \regulator\ and $\Res_{s=1}L(1,\Pi,\As^{(-1)^{n-1}})$}
We may now prove our first main theorem on the Asai $L$-function.

\begin{thm}\label{thm:Asai}
Let $\Pi$ be a conjugate self-dual, cuspidal automorphic representation of $G(\A_E)=\GL_n(\A_E)$, which is cohomological with respect to an irreducible,
finite-dimensional, algebraic representation $E_\mu$. Assume Hypothesis \ref{as: nonzero}.
Then, $\left(\abs{D_F}^{n(n+1)/4}\Res_{s=1}L(s,\Pi,\As^{(-1)^{n-1}})\right)^{-1}$ spans the one-dimensional $\Q(\Pi_f)$-vector space
\[
\Lambda'\circ\per_{(\Whit^{\psi_\infty}(\Pi_\infty),\ell_\infty)}^{\coh,t}(\strc^{\psi,t}_\Pi)\subset\C
\]
where $\Lambda':(\wedge^t\tilde\p')^*\rightarrow\C$ is the evaluation at the element $\Lambda_0'\in\wedge^t\tilde\p'$ defined above.
\end{thm}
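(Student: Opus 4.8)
The strategy is to imitate the proof of Theorem~\ref{thm:Whittaker}, replacing the cohomological pairing $\pair$ by the cohomological period $\per$, the Petersson product by the distinguishing functional $\ell_{\aut}$, and the Rankin--Selberg identity \eqref{eq: loctoglob} by its Asai counterpart \eqref{eq: loctoglobAs}. First I would introduce two linear forms on the one-dimensional space $H^t(\m_G,K_\infty,\Whit^{\psi_\infty}(\Pi_\infty)\otimes E_\mu)$. The \emph{local} form is $\per^{\loc,t}_{\Pi_\infty}:=\Lambda'\circ\per_{(\Whit^{\psi_\infty}(\Pi_\infty),\ell_\infty)}^{\coh,t}$; since $(\wedge^t\tilde\p')^*$ is one-dimensional and $\Lambda'$ is evaluation at the nonzero vector $\Lambda_0'$, Hypothesis~\ref{as: nonzero}, applied to $\rho=\Whit^{\psi_\infty}(\Pi_\infty)\cong\Pi_\infty$, forces $\per^{\loc,t}_{\Pi_\infty}\neq 0$. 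The \emph{global} form $\per^{\glo,t}_\Pi$ is the one compatible, under the isomorphism \eqref{eq: whipsiHi}, with the global period $\per^{\gper,t}$ on $H^t(\m_G,K_\infty,\Pi\otimes E_\mu)$ and with the functional $\ell_f$ on $\Whit^{\psi_f}(\Pi_f)$; this is well defined because, by local uniqueness of the distinction functional at the finite places together with the one-dimensionality of the top-degree archimedean cohomology, $\per^{\gper,t}$ factors through \eqref{eq: whipsiHi} as a tensor with $\ell_f$.

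The key step, and the main obstacle, is the proportionality
\[
\per^{\glo,t}_\Pi=\abs{D_F}^{n(n+1)/4}\,\Res_{s=1}L(s,\Pi,\As^{(-1)^{n-1}})\cdot \per^{\loc,t}_{\Pi_\infty}.
\]
To prove it one unwinds that $\per^{\gper,t}$ is the volume of $\SSS_F$ against the measure $\per_{(\Pi_\infty,\ell_{\aut})}^{\coh,t}\otimes\gamma'$, and that $\ell_{\aut}(\whit^\psi_\varphi)$ factors, via \eqref{eq: loctoglobAs}, as $\abs{D_F}^{n(n+1)/4}\Res_{s=1}L(s,\Pi,\As^{(-1)^{n-1}})$ times the product of the archimedean functional $\ell_\infty$ and the finite functional $\ell_f$ evaluated at $\whit^\psi_\varphi$; this is exactly the bookkeeping carried out in Theorem~\ref{thm:Whittaker}, now with \eqref{eq: loctoglobAs} in place of \eqref{eq: loctoglob}, and it uses the rationality of the measures on $U(F_S)\bs\mira(F_S)$ fixed in \S\ref{sec: measures_F}. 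Apart from this decomposition, no analytic input beyond \eqref{eq: loctoglobAs} is required.

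It remains to evaluate on rational structures. Under \eqref{eq: whipsiHi} the cohomological $\Q(\Pi_f)$-structure of $H^t(\m_G,K_\infty,\Pi\otimes E_\mu)$ is $\strc^{\psi,t}_\Pi\otimes W_f$, where $W_f\subset\Whit^{\psi_f}(\Pi_f)$ is the rational structure of \S\ref{sec: ratwhit}; this is the very definition of $\strc^{\psi,t}_\Pi$. Since $\per^{\gper,t}$ is $\Aut(\C)$-equivariant it carries this $\Q(\Pi_f)$-structure into $\Q(\Pi_f)$, and being nonzero (which is precisely what Hypothesis~\ref{as: nonzero} buys us) its image there is the full $\Q(\Pi_f)$-line; likewise $\ell_f$ is $\Q(\Pi_f)$-rational, by the equivariance $\ell_S({}^\sigma W)=\sigma(\ell_S(W))$, and nonzero, so $\ell_f(W_f)=\Q(\Pi_f)$. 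Combining these with the factorization of $\per^{\gper,t}$ gives $\per^{\glo,t}_\Pi(\strc^{\psi,t}_\Pi)=\Q(\Pi_f)$, whence the proportionality above yields
\[
\Lambda'\circ\per_{(\Whit^{\psi_\infty}(\Pi_\infty),\ell_\infty)}^{\coh,t}(\strc^{\psi,t}_\Pi)=\bigl(\abs{D_F}^{n(n+1)/4}\Res_{s=1}L(s,\Pi,\As^{(-1)^{n-1}})\bigr)^{-1}\Q(\Pi_f).
\]
This $\Q(\Pi_f)$-line is one-dimensional because $\strc^{\psi,t}_\Pi$ is (the ambient $\C$-space being one-dimensional) and $\per^{\loc,t}_{\Pi_\infty}$ is injective on it, which is the assertion of the theorem.
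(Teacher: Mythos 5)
Your proposal reproduces exactly the argument the paper gives: it transposes the proof of Theorem~\ref{thm:Whittaker} by replacing the Petersson pairing $\pair^{\Pet,p}$ with the distinguishing form $\ell_{\aut}$ and the cohomological period $\per^{\gper,t}$, the archimedean pairing $[\cdot,\cdot]_\infty$ with $\ell_\infty$, the finite pairing $[\cdot,\cdot]_f$ with the $\Aut(\C)$-equivariant $\ell_f$, and the factorization \eqref{eq: loctoglob} with its Asai analogue \eqref{eq: loctoglobAs}, then evaluates on $\strc^{\psi,t}_\Pi$. This is precisely the paper's (very terse) proof, and your intermediate observations---the use of Hypothesis~\ref{as: nonzero} to ensure $\per^{\gper,t}\neq 0$, the one-dimensionality of the top-degree cohomology, and the rationality of the measures from \S\ref{sec: measures_F}---all match the reasoning the paper intends the reader to supply.
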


\begin{proof}
Arguing as in the proof of Thm.\ \ref{thm:Whittaker}, the result follows using relation \eqref{eq: loctoglobAs}, which compares the
global period-map $\per^{\gper,t}$ with the local period-map $\Lambda'\circ\per_{(\Whit^{\psi_\infty}(\Pi_\infty),\ell_\infty)}^{\coh,t}$,
and the fact that $\ell_f({}^\sigma W)=\sigma(\ell_f(W))$ for all $\sigma\in\Aut(\C)$.
\end{proof}

\section{A relation between the bottom Whittaker \regulator\ and $L(1,\Pi,\As^{(-1)^{n}})$}
\subsection{}
We will now put the contents of \S \ref{sect:RS} and \S \ref{sect:Asai} together, in order to obtain a rationality result for $L(1,\Pi,\As^{(-1)^{n}})$. To that end, recall the pairing $\pair_{(\Whit^{\psi_\infty}(\Pi_\infty),\Whit^{\psi_\infty^{-1}}(\Pi_\infty^\vee),[\cdot,\cdot]_\infty)}^{\coh,i}$ from \S \ref{sect:pairing_gK}. We use it to identify $\Lambda'\circ\per_{(\Whit^{\psi_\infty}(\Pi_\infty),\ell_\infty)}^{\coh,t}$ as an element of
$$H^b(\m_G,K_\infty,\Whit^{\psi_\infty^{-1}}(\Pi_\infty^\vee)\otimes E_\mu^\vee)\otimes\wedge^d\tilde\p,$$
where $b=\frac{n(n-1)}{2}[F:\Q]$. (Recall that $b+t=d$.) We write this element as $\coper_{(\Whit^{\psi_\infty}(\Pi_\infty),\ell_\infty)}^{\coh,b}\otimes\Lambda_0$, where
\[
\coper_{(\Whit^{\psi_\infty}(\Pi_\infty),\ell_\infty)}^{\coh,b}\in H^b(\m_G,K_\infty,\Whit^{\psi_\infty^{-1}}(\Pi_\infty^\vee)\otimes E_\mu^\vee).
\]
We may now prove our second main theorem on the Asai $L$-function.

\begin{thm} \label{thm: crit}
Let $\Pi$ be a conjugate self-dual, cuspidal automorphic representation of $G(\A_E)=\GL_n(\A_E)$, which is cohomological with respect to an irreducible,
finite-dimensional, algebraic representation $E_\mu$. Assume Hypothesis \ref{as: nonzero}. Then,
\[
\left(\abs{D_E/D_F}^{n(n+1)/4}L(1,\Pi,\As^{(-1)^n})\right)^{-1} \cdot \coper_{(\Whit^{\psi_\infty}(\Pi_\infty),\ell_\infty)}^{\coh,b}
\]
spans the one-dimensional $\Q(\Pi_f)$-vector subspace $\strc^{\psi^{-1},b}_{\Pi^\vee}$ of
$H^b(\m_G,K_\infty,\Whit^{\psi_\infty^{-1}}(\Pi_\infty^\vee)\otimes E_\mu^\vee)$.
\end{thm}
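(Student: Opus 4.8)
The plan is to combine the two previously established rationality results—Theorem~\ref{thm:Whittaker} (which handles $\Res_{s=1}L(s,\Pi\times\Pi^\vee)$ via a Poincar\'e-duality pairing between the degree-$p$ and degree-$(d-p)$ Whittaker \regulator s) and Theorem~\ref{thm:Asai} (which handles $\Res_{s=1}L(s,\Pi,\As^{(-1)^{n-1}})$ via the archimedean period on the top-degree \regulator)—together with the factorization \eqref{RS} of the Rankin--Selberg $L$-function. Specializing Theorem~\ref{thm:Whittaker} to $p=t$ (so $d-p=b$), it tells us that
\[
\left(\abs{D_E}^{n(n+1)/4}\Res_{s=1}L(s,\Pi\times\Pi^\vee)\right)^{-1}\cdot\Lambda\circ\pair_{(\Whit^{\psi_\infty}(\Pi_\infty),\Whit^{\psi_\infty^{-1}}(\Pi_\infty^\vee),[\cdot,\cdot]_\infty)}^{\coh,t}(\strc^{\psi,t}_\Pi,\strc^{\psi^{-1},b}_{\Pi^\vee})
\]
equals the $\Q(\Pi_f)$-line inside $\C$. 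Theorem~\ref{thm:Asai} says $\left(\abs{D_F}^{n(n+1)/4}\Res_{s=1}L(s,\Pi,\As^{(-1)^{n-1}})\right)^{-1}$ spans $\Lambda'\circ\per_{(\Whit^{\psi_\infty}(\Pi_\infty),\ell_\infty)}^{\coh,t}(\strc^{\psi,t}_\Pi)$. The definition of $\coper_{(\Whit^{\psi_\infty}(\Pi_\infty),\ell_\infty)}^{\coh,b}$ is precisely engineered so that pairing $\strc^{\psi,t}_\Pi$ against $\coper_{(\Whit^{\psi_\infty}(\Pi_\infty),\ell_\infty)}^{\coh,b}$ via $\Lambda\circ\pair^{\coh,t}$ reproduces $\Lambda'\circ\per^{\coh,t}$ applied to $\strc^{\psi,t}_\Pi$—this is the "identification" recorded just before the statement, and it is where the discriminant factor $\Lambda_0$ (and implicitly the ratio $\abs{D_E/D_F}$) enters the bookkeeping.

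So the first step is to make that identification explicit: write out that for $\omega\in\strc^{\psi,t}_\Pi$,
\[
\Lambda\!\left(\pair_{(\Whit^{\psi_\infty}(\Pi_\infty),\Whit^{\psi_\infty^{-1}}(\Pi_\infty^\vee),[\cdot,\cdot]_\infty)}^{\coh,t}\bigl(\omega,\coper_{(\Whit^{\psi_\infty}(\Pi_\infty),\ell_\infty)}^{\coh,b}\bigr)\right)=\Lambda'\!\left(\per_{(\Whit^{\psi_\infty}(\Pi_\infty),\ell_\infty)}^{\coh,t}(\omega)\right),
\]
which holds by construction once one unwinds how $\Lambda_0\in\wedge^d\tilde\p$ was used to split off $\coper^{\coh,b}$ from the element $\Lambda'\circ\per^{\coh,t}\in H^b(\m_G,K_\infty,\Whit^{\psi_\infty^{-1}}(\Pi_\infty^\vee)\otimes E_\mu^\vee)\otimes\wedge^d\tilde\p$. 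The second step is the numerical comparison of $L$-functions: by \eqref{RS} we have $\Res_{s=1}L(s,\Pi\times\Pi^\tau)=\Res_{s=1}L(s,\Pi,\As^{(-1)^{n-1}})\cdot L(1,\Pi,\As^{(-1)^n})$, and since $\Pi^\tau\cong\Pi^\vee$ the left side is $\Res_{s=1}L(s,\Pi\times\Pi^\vee)$; the last Asai factor is holomorphic and nonvanishing at $s=1$ as recalled in the introduction. Dividing, the ratio $\left(\abs{D_E}^{n(n+1)/4}\Res_{s=1}L(s,\Pi\times\Pi^\vee)\right)^{-1}\big/\left(\abs{D_F}^{n(n+1)/4}\Res_{s=1}L(s,\Pi,\As^{(-1)^{n-1}})\right)^{-1}$ equals $\abs{D_E/D_F}^{-n(n+1)/4}\cdot L(1,\Pi,\As^{(-1)^n})$, which is exactly the scalar appearing (inverted) in the statement.

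The third step is to assemble: Theorem~\ref{thm:Whittaker} at $p=t$ says that the linear functional on $\strc^{\psi,t}_\Pi$ given by pairing with $\left(\abs{D_E}^{n(n+1)/4}\Res_{s=1}L(s,\Pi\times\Pi^\vee)\right)^{-1}\coper_{(\Whit^{\psi_\infty}(\Pi_\infty),\ell_\infty)}^{\coh,b}$ has image the $\Q(\Pi_f)$-line, \emph{provided} this scaled $\coper^{\coh,b}$ lies in $\strc^{\psi^{-1},b}_{\Pi^\vee}$; conversely, since both $\strc^{\psi,t}_\Pi$ and the target are $\Q(\Pi_f)$-rational and the pairing $\pair^{\coh,t}$ restricted to $\strc^{\psi,t}_\Pi\times\strc^{\psi^{-1},b}_{\Pi^\vee}$ lands (after $\Lambda$) in $\Q(\Pi_f)$ and is non-degenerate (these one-dimensional spaces being dual under Poincar\'e duality), the element of $H^b(\m_G,K_\infty,\Whit^{\psi_\infty^{-1}}(\Pi_\infty^\vee)\otimes E_\mu^\vee)$ that pairs $\strc^{\psi,t}_\Pi$ onto $\Q(\Pi_f)$ is \emph{unique up to} $\Q(\Pi_f)^\times$ and spans $\strc^{\psi^{-1},b}_{\Pi^\vee}$. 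Combining with the identification from step one and Theorem~\ref{thm:Asai}, one gets that $\left(\abs{D_F}^{n(n+1)/4}\Res_{s=1}L(s,\Pi,\As^{(-1)^{n-1}})\right)^{-1}\coper^{\coh,b}$ spans the same $\Q(\Pi_f)$-line as $\left(\abs{D_E}^{n(n+1)/4}\Res_{s=1}L(s,\Pi\times\Pi^\vee)\right)^{-1}\coper^{\coh,b}$ does inside $\strc^{\psi^{-1},b}_{\Pi^\vee}$; dividing these two spanning elements and using step two yields that $\left(\abs{D_E/D_F}^{n(n+1)/4}L(1,\Pi,\As^{(-1)^n})\right)^{-1}\coper^{\coh,b}$ spans $\strc^{\psi^{-1},b}_{\Pi^\vee}$. \textbf{The main obstacle} I anticipate is step one: keeping the archimedean duality bookkeeping consistent—specifically making sure the pairing $[\cdot,\cdot]_\infty$ used in $\pair^{\coh,t}$ is the same normalization implicit in the definition of $\ell_\infty$ and $\ell_\mu$, and that the two identifications of $(\wedge^d\tilde\p)^*$ and $(\wedge^t\tilde\p')^*$ with spaces of invariant measures (one over $E$, one over $F$) are related by precisely the factor that turns $\abs{D_E}$ and $\abs{D_F}$ into their ratio. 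This is exactly the kind of place where a spurious factor in $\Q^\times$ (or worse) could creep in, and it must be checked that the construction of $\coper^{\coh,b}$ absorbs it cleanly so that no extra rational fudge factor survives beyond what the $\Q(\Pi_f)$-line tolerates.
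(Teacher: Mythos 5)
Your proposal is correct and follows the paper's route exactly: the paper's one-line proof likewise invokes Theorem~\ref{thm:Whittaker} at $p=t$, Theorem~\ref{thm:Asai}, and the factorization \eqref{RS}, and your steps one and two correctly unwind the definition of $\coper_{(\Whit^{\psi_\infty}(\Pi_\infty),\ell_\infty)}^{\coh,b}$ via $\Lambda\circ\pair^{\coh,t}(\cdot,\coper^{\coh,b})=\Lambda'\circ\per^{\coh,t}(\cdot)$ and the residue identity $\Res_{s=1}L(s,\Pi\times\Pi^\vee)=\Res_{s=1}L(s,\Pi,\As^{(-1)^{n-1}})\cdot L(1,\Pi,\As^{(-1)^n})$. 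One small slip in the final assembly: the two scaled copies of $\coper^{\coh,b}$ you write down certainly span the same $\C$-line but not the same $\Q(\Pi_f)$-line, and the ``division'' should happen at the level of the pairing values --- comparing $\Lambda\circ\pair^{\coh,t}(\omega_0,\coper^{\coh,b})$ and $\Lambda\circ\pair^{\coh,t}(\omega_0,\eta_0)$ for $\Q(\Pi_f)$-generators $\omega_0,\eta_0$ of $\strc^{\psi,t}_\Pi$, $\strc^{\psi^{-1},b}_{\Pi^\vee}$, which gives $\coper^{\coh,b}=c\cdot\abs{D_E/D_F}^{n(n+1)/4}L(1,\Pi,\As^{(-1)^n})\cdot\eta_0$ with $c\in\Q(\Pi_f)^\times$ and hence the claim.
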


\begin{proof}
The theorem follows readily from Thm.\ \ref{thm:Whittaker} and Thm.\ \ref{thm:Asai} and the relation
$L(s,\Pi \times\Pi^\tau) = L(s,\Pi,\As^{(-1)^{n-1}})\cdot L(s,\Pi,\As^{(-1)^{n}})$.
\end{proof}

\begin{rem}
Theorem \ref{thm: crit} generalizes a result of the first two named authors, see \cite{1308.5090} Thm.\ 6.22.
\end{rem}

\bigskip

\bibliographystyle{amsalpha}

\def\cprime{$'$}
\providecommand{\bysame}{\leavevmode\hbox to3em{\hrulefill}\thinspace}
\providecommand{\MR}{\relax\ifhmode\unskip\space\fi MR }
\providecommand{\MRhref}[2]{%
  \href{http://www.ams.org/mathscinet-getitem?mr=#1}{#2}
}
\providecommand{\href}[2]{#2}

\end{document}